\nonstopmode \numberwithin{equation}{section}
\newtheorem{theorem}{Theorem}[section]
\newtheorem{proposition}{Proposition}[section]
\newtheorem{lemma}{Lemma}[section]
\newtheorem{remark}{Remark}[section]
\DeclareMathOperator{\real}{Re}
\begin{document}

\title[The generalized $\mathtt{k}$-Bessel functions]{Representation formulae and Monotonicity of the generalized $\mathtt{k}$-Bessel functions}
\author[Saiful R. Mondal]{Saiful R. Mondal}
\address{ Department of Mathematics and Statistics, College of Science,
King Faisal University, Al-Hasa 31982, Saudi Arabia}
\email{smondal@kfu.edu.sa}
\begin{abstract}
This paper introduces and studies a generalization of the $\mathtt{k}$-Bessel function of order $\nu$ given by
\[\mathtt{W}^{\mathtt{k}}_{\nu, c}(x):= \sum_{r=0}^\infty \frac{(-c)^r}{\Gamma_{\mathtt{k}}\left( r \mathtt{k} +\nu+\mathtt{k}\right) r!} \left(\frac{x}{2}\right)^{2r+\frac{\nu}{\mathtt{k}}}.
\]
Representation formulae are derived for $\mathtt{W}^{\mathtt{k}}_{\nu, c}.$ Further the function $\mathtt{W}^{\mathtt{k}}_{\nu, c}$ is shown to be a solution of a second order differential equation. Monotonicity and log-convexity properties for the generalized $\mathtt{k}$-Bessel function $\mathtt{W}^{\mathtt{k}}_{\nu, c}$  are investigated, particularly in the case $c=-1$. Several inequalities, including the Tur\'an-type inequality are established.

\end{abstract}
\keywords{Generalized $\mathtt{k}$ Bessel functions, monotonicity, log-convexity,
Tur\'an-type inequality}
\subjclass[2010]{33C10, 33B15, 34B30 }

\maketitle

\section{Introductions}
Motivated with the repeated appearance of the expression
\[x(x + \mathtt{k})(x + 2\mathtt{k})\ldots(x + (n -1)\mathtt{k})\]
in  the combinatorics of creation and annihilation
operators \cite{Diaz2, Diaz3} and the perturbative computation of Feynman integrals,
see \cite{Deligne}, a generalization of the well-known Pochhammer symbols is given in \cite{Diaz} as
\[(x)_{n,\mathtt{k}}:=x(x + \mathtt{k})(x + 2\mathtt{k})\ldots(x + (n -1)\mathtt{k}),\]
 for all $\mathtt{k}>0$ and called it as  Pochhammer k-symbol. The closely associated functions relate with the Pochhammer symbols are the gamma and beta functions. Thus it is natural to introduce about $\mathtt{k}$-gamma and $\mathtt{k}$-beta function.
The $\mathtt{k}$-gamma  functions, denoted as $\Gamma_{\mathtt{k}}$,  is   studied in \cite{Diaz}, and defined  by
 \begin{align}
 \Gamma_{\mathtt{k}}(x):= \int_0^\infty t^{x-1}e^{-\frac{t^{\mathtt{k}}}{\mathtt{k}}} dt,
 \end{align}
for  $\real(x)>0$. Several properties of the $\mathtt{k}$-gamma functions and it's applications to generalize other related functions like as $\mathtt{k}$-beta functions, $\mathtt{k}$-digamma functions, can be seen in the articles \cite{Diaz, Kwara14, Mubeen13} and references  therein.

The  $k$-digamma functions defined by $\Psi_\mathtt{k}:=\Gamma_\mathtt{k}'/\Gamma_\mathtt{k} $ is  studied in \cite{Kwara14}. This functions have the series representation as
\begin{align}\label{def-digamma}
\Psi_\mathtt{k}(t):=\frac{\log(\mathtt{k})-\gamma_1}{\mathtt{k}}-\frac{1}{t}
+\sum_{n=1}^\infty \frac{t}{n\mathtt{k}(n\mathtt{k}+t)}
\end{align}
where $\gamma_1$ is the Euler-Mascheroni’s constant.

A calculation yields
\begin{align}\label{def-digamma-2} \Psi_\mathtt{k}'(t)=\sum_{n=0}^\infty  \frac{1}{(n\mathtt{k}+t)^2}, \quad \mathtt{k}>0 \quad \text{and} \quad t>0.\end{align}
Clearly, $\Psi_\mathtt{k}$ is increasing on $(0, \infty)$.

The Bessel function of order $p$ is given by
\begin{equation}\label{Strv}
\mathtt{J}_p(x):=  \sum_{k=0}^\infty \frac{(-1)^k}{\mathrm{\Gamma}{\left(k +p+1\right)} \mathrm{\Gamma}{\left(k+1\right)}} \left(\frac{x}{2}\right)^{2k+p}
\end{equation}
is a particular solution of the  Bessel differential equation
 \begin{align}\label{BesselDE}
 x^2 y''(x)+ x y'(x)+(x^2-p^2)y(x)= 0.
 \end{align}
 Here $\mathrm{\Gamma}$ denote the gamma function. A solution of the modified Bessel equation
 \begin{align}\label{modBesselDE}
 x^2 y''(x)+ x y'(x)-(x^2+{\nu}^2)y(x)= 0.
 \end{align}
 yields the modified Bessel function
 \begin{equation}\label{modBessel}
\mathtt{I}_\nu(x):=  \sum_{k=0}^\infty \frac{1}{\mathrm{\Gamma}{\left( k +\nu+1\right)} \mathrm{\Gamma}{\left( k+1\right)}} \left(\frac{x}{2}\right)^{2k+\nu}.
\end{equation}

The Bessel function has gone through several generalizations and investigations, notably in \cite{Baricz3,Galues-Bessel}. In \cite{Baricz3}, generalized Bessel function defined on the complex plane, and obtained sufficient conditions for it to be univalent, starlike, close-to-convex, and convex. This generalization is given by the power series
\begin{align}\label{gbf-Baricz}
\mathcal{W}_{p, b, c}(z)= \sum_{k=0}^\infty \frac{(-c)^k \left(\frac{z}{2}\right)^{2k+p+1}}{\mathrm{\Gamma}\left(k+\frac{3}{2}\right) \mathrm{\Gamma}\left(k+p+\frac{b+2}{2}\right)}, \quad \quad p, b, c \in \mathbb{C}.
\end{align}

In this article we will consider the function defined by the series
\begin{align}\label{eqn-1}
\mathtt{W}_{\nu, c}^{\mathtt{k}} (x) :=\sum_{r=0}^\infty \frac{(-c)^r }{ \Gamma_{\mathtt{k}}(r \mathtt{k}+ \nu+ \mathtt{k}) r!} \left(\frac{x}{2}\right)^{2r+\frac{\nu}{\mathtt{k}}}.
\end{align}
where $\mathtt{k}>0$, $\nu>-1$, and $c \in \mathbb{R}$. Since for $\mathtt{k}\to 1$, the $\mathtt{k}$-Bessel functions
$\mathtt{W}_{\nu, 1}^{1}$ reduce to the classical Bessel function $J_{\nu}$, while $\mathtt{W}_{\nu, -1}^{1}$ is equivalent to the modified Bessel function $I_\nu$. Thus, we call
 the function $\mathtt{W}_{\nu, c}^{\mathtt{k}}$  as the generalized $\mathtt{k}$-Bessel functions. The basic properties about the $\mathtt{k}$-Bessel functions can be seen in the work by Ghelot et. al. \cite{Gehlot-1,Gehlot-2,Gehlot-3}.

Tur\'an \cite{Turan} proved that the Legendre polynomials $P_n(x)$ satisfy the determinantal inequality
\begin{align}\label{turan-ine}
\left|
          \begin{array}{cc}
            P_n(x) & P_{n+1}(x) \\
            P_{n+1}(x) & P_{n+2}(x) \\
          \end{array}
      \right|
\leq  0, \quad -1 \le x \le 1
\end{align}
where $n = 0, 1, 2, \ldots$ and equality occurs only if $x = \pm 1$. The above classical result has been extended in several directions, for example, ultraspherical polynomials, Laguerre and Hermite polynomials,
Bessel functions of the first kind, modified Bessel functions, Polygamma etc.  Karlin and Szegö \cite{Karlin} named determinants such  in \eqref{turan-ine} as  Tur\'anians.

In Section $\ref{sec-2}$, representation formulae and few recurrence relation for $\mathtt{W}_{\nu, c}^{\mathtt{k}}$ will be derived.  More importantly, the function $\mathtt{W}_{\nu, c}^{\mathtt{k}}$ is shown to be a solution of a certain differential equation of second order, which reduces to \eqref{BesselDE} and \eqref{modBesselDE} in the case $\mathtt{k}=1$ and for particular values of $c.$ At the end of the Section $\ref{sec-2}$, two type integral representations of $\mathtt{W}_{\nu, c}^{\mathtt{k}}$ are also given.

Section $\ref{sec-3}$ is devoted to the investigation of monotonicity and log-convexity properties involving the function $\mathtt{W}_{\nu, c}^{\mathtt{k}},$ as well as  the ratio between two $\mathtt{k}$-Bessel functions of different order. As a consequence, Tur\'an-type inequalities are deduced.

\section{Representations for the $\mathtt{k}$-Bessel function}\label{sec-2}
\subsection{Differential equation} In this section we will find the differential equations corresponding to the functions $\mathtt{W}_{\nu, c}^{\mathtt{k}}$.

Differentiating both  side of \eqref{eqn-1} with respect to $x$, it follows that
\begin{align}\label{eqn-3}
x \frac{d}{dx}\mathtt{W}_{\nu, c}^{\mathtt{k}} (x) =\sum_{r=0}^\infty \frac{(-c)^r\left(2r+\frac{\nu}{\mathtt{k}}\right) }{ \Gamma_{\mathtt{k}}(r \mathtt{k}+ \nu+ \mathtt{k}) r!} \left(\frac{x}{2}\right)^{2r+\frac{\nu}{\mathtt{k}}}.
\end{align}
Recall that the $\mathtt{k}$-gamma function satisfy the relation $\Gamma_\mathtt{k}(z+\mathtt{k})= z \Gamma_\mathtt{k}(z)$. Now differentiate \eqref{eqn-3} with respect to $x$ and then using this property  yields
\begin{align*}
&x^2 \frac{d^2}{dx^2}\mathtt{W}_{\nu, c}^{\mathtt{k}} (x)+ x \frac{d}{dx}\mathtt{W}_{\nu, c}^{\mathtt{k}} (x)\\&=\sum_{r=0}^\infty \frac{(-c)^r\left(2r+\frac{\nu}{\mathtt{k}}\right)^2 }{ \Gamma_{\mathtt{k}}(r \mathtt{k}+ \nu+ \mathtt{k}) r!} \left(\frac{x}{2}\right)^{2r+\frac{\nu}{\mathtt{k}}}\\
&= \sum_{r=1}^\infty \frac{(-c)^r 4r\left(r+\frac{\nu}{\mathtt{k}}\right) }{ \Gamma_{\mathtt{k}}(r \mathtt{k}+ \nu+ \mathtt{k}) r!} \left(\frac{x}{2}\right)^{2r+\frac{\nu}{\mathtt{k}}}+
\frac{\nu^2}{\mathtt{k}^2} \sum_{r=0}^\infty \frac{(-c)^r }{ \Gamma_{\mathtt{k}}(r \mathtt{k}+ \nu+ \mathtt{k}) r!} \left(\frac{x}{2}\right)^{2r+\frac{\nu}{\mathtt{k}}}\\
&=\frac{4}{\mathtt{k}} \sum_{r=1}^\infty \frac{(-c)^r }{ \Gamma_{\mathtt{k}}(r \mathtt{k}+ \nu) (r-1)!} \left(\frac{x}{2}\right)^{2r+\frac{\nu}{\mathtt{k}}}+
\frac{\nu^2}{\mathtt{k}^2}\mathtt{W}_{\nu, c}^{\mathtt{k}} (x)\\
 &=- \frac{c x^2}{\mathtt{k}}\mathtt{W}_{\nu, c}^{\mathtt{k}} (x) +
\frac{\nu^2}{\mathtt{k}^2}\mathtt{W}_{\nu, c}^{\mathtt{k}} (x).
\end{align*}

Thus we have the following result.
\begin{proposition}Let $\mathtt{k}>0$ and $\nu >-k$. Then the function $\mathtt{W}_{\nu, c}^{\mathtt{k}}$ is the solution of the homogeneous  differential equation
\begin{align}\label{eqn-4}
\frac{d^2y}{dx^2}+ x^{-1} \frac{dy}{dx}+\frac{1}{\mathtt{k}^2}\left(c\; \mathtt{k}-
\frac{\nu^2}{x^2}\right)y=0.
\end{align}
\end{proposition}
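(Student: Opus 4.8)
The plan is to verify the claim directly from the series definition \eqref{eqn-1}, treating $\mathtt{W}_{\nu,c}^{\mathtt{k}}$ as a power series that may be differentiated term by term. First I would observe that the series \eqref{eqn-1} converges absolutely and uniformly on compact subsets of $(0,\infty)$: the ratio of the $(r+1)$-st to the $r$-th coefficient is $|c|\,\Gamma_{\mathtt{k}}(r\mathtt{k}+\nu+\mathtt{k})\,r!/(\Gamma_{\mathtt{k}}((r+1)\mathtt{k}+\nu+\mathtt{k})\,(r+1)!)\to 0$, so the radius of convergence in the variable $x^2$ is infinite and $\mathtt{W}_{\nu,c}^{\mathtt{k}}$ is smooth. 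This makes all the manipulations below legitimate. Applying the operator $x\,d/dx$ once reproduces \eqref{eqn-3}, and applying it a second time produces the combination $x^2 y'' + x y'$ carrying a factor $(2r+\nu/\mathtt{k})^2$ inside the sum.

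The key algebraic step is to split this factor as
\begin{align*}
\left(2r+\frac{\nu}{\mathtt{k}}\right)^2 = 4r\left(r+\frac{\nu}{\mathtt{k}}\right)+\frac{\nu^2}{\mathtt{k}^2},
\end{align*}
which separates the computation into two pieces. The $\nu^2/\mathtt{k}^2$ piece immediately returns $(\nu^2/\mathtt{k}^2)\,\mathtt{W}_{\nu,c}^{\mathtt{k}}(x)$. For the first piece I would invoke the fundamental recurrence $\Gamma_\mathtt{k}(z+\mathtt{k}) = z\,\Gamma_\mathtt{k}(z)$ with $z=r\mathtt{k}+\nu$, so that $\Gamma_\mathtt{k}(r\mathtt{k}+\nu+\mathtt{k}) = (r\mathtt{k}+\nu)\,\Gamma_\mathtt{k}(r\mathtt{k}+\nu)$, together with the identities $r+\nu/\mathtt{k} = (r\mathtt{k}+\nu)/\mathtt{k}$ and $r/r! = 1/(r-1)!$. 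These collapse the coefficient to $4/(\mathtt{k}\,\Gamma_\mathtt{k}(r\mathtt{k}+\nu)\,(r-1)!)$ and let the sum start at $r=1$.

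The final step is the index shift $r\mapsto r+1$, which realigns the summation back to the form of \eqref{eqn-1}: it produces an overall factor $-c$, replaces $\Gamma_\mathtt{k}(r\mathtt{k}+\nu)$ by $\Gamma_\mathtt{k}(r\mathtt{k}+\nu+\mathtt{k})$, and pulls out $(x/2)^2 = x^2/4$ from the power of $x$. After multiplying by the pending factor $4/\mathtt{k}$, this yields $-(c x^2/\mathtt{k})\,\mathtt{W}_{\nu,c}^{\mathtt{k}}(x)$. Collecting the two pieces gives $x^2 y'' + x y' = -(cx^2/\mathtt{k})y + (\nu^2/\mathtt{k}^2)y$; dividing by $x^2$ and transposing produces exactly \eqref{eqn-4}.

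I expect no genuine obstacle here: the content is essentially careful bookkeeping, and the single point requiring attention is the combined use of the $\mathtt{k}$-gamma recurrence and the factorial identity $r/r! = 1/(r-1)!$, which together make the reindexed series match \eqref{eqn-1} coefficient for coefficient. The hypothesis $\nu > -\mathtt{k}$ (equivalently $\nu+\mathtt{k}>0$) is what keeps every argument of $\Gamma_\mathtt{k}$ positive, so that the original series and all intermediate sums are well defined throughout.
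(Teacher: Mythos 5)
Your proposal is correct and follows essentially the same route as the paper: applying $x\,d/dx$ twice, splitting $\left(2r+\frac{\nu}{\mathtt{k}}\right)^2 = 4r\left(r+\frac{\nu}{\mathtt{k}}\right)+\frac{\nu^2}{\mathtt{k}^2}$, collapsing the first piece via $\Gamma_\mathtt{k}(z+\mathtt{k})=z\,\Gamma_\mathtt{k}(z)$ and $r/r!=1/(r-1)!$, and reindexing $r\mapsto r+1$ to recover $-\frac{c\,x^2}{\mathtt{k}}\,\mathtt{W}_{\nu,c}^{\mathtt{k}}(x)$. Your added justification of term-by-term differentiation via the infinite radius of convergence is a small improvement the paper leaves implicit.
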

\subsection{Recurrence relations}
From  \eqref{eqn-3}, we have
\begin{align*}
x \frac{d}{dx}\mathtt{W}_{\nu, c}^{\mathtt{k}} (x) &=\frac{1}{\mathtt{k}}\sum_{r=0}^\infty \frac{(-c)^r\left(2r \mathtt{k} +{\nu}\right) }{ \Gamma_{\mathtt{k}}(r \mathtt{k}+ \nu+ \mathtt{k}) r!} \left(\frac{x}{2}\right)^{2r+\frac{\nu}{\mathtt{k}}}\\
&=\frac{\nu}{\mathtt{k}}\sum_{r=0}^\infty \frac{(-c)^r\left(2r \mathtt{k} +{\nu}\right) }{ \Gamma_{\mathtt{k}}(r \mathtt{k}+ \nu+ \mathtt{k}) r!} \left(\frac{x}{2}\right)^{2r+\frac{\nu}{\mathtt{k}}}
+2 \sum_{r=1}^\infty \frac{(-c)^r}{ \Gamma_{\mathtt{k}}(r \mathtt{k}+ \nu+ \mathtt{k}) (r-1)!} \left(\frac{x}{2}\right)^{2r+\frac{\nu}{\mathtt{k}}}\\
&=\frac{\nu}{\mathtt{k}}\mathtt{W}_{\nu, c}^{\mathtt{k}}(x)
+2 \sum_{r=0}^\infty \frac{(-c)^{r+1}}{ \Gamma_{\mathtt{k}}(r \mathtt{k}+ \nu+ 2\mathtt{k}) r!} \left(\frac{x}{2}\right)^{2r+2+\frac{\nu}{\mathtt{k}}}\\
&=\frac{\nu}{\mathtt{k}}\mathtt{W}_{\nu, c}^{\mathtt{k}}(x)- xc \mathtt{W}_{\nu+\mathtt{k}, c}^{\mathtt{k}}(x).
\end{align*}
Thus, we have the difference equation
\begin{align}\label{rr-1}
x \frac{d}{dx}\mathtt{W}_{\nu, c}^{\mathtt{k}} (x)=\frac{\nu}{\mathtt{k}}\mathtt{W}_{\nu, c}^{\mathtt{k}}(x)- xc \mathtt{W}_{\nu+\mathtt{k}, c}^{\mathtt{k}}(x).
\end{align}
Again rewrite \eqref{eqn-3} as
\begin{align*}
x \frac{d}{dx}\mathtt{W}_{\nu, c}^{\mathtt{k}} (x) &=\frac{1}{\mathtt{k}}\sum_{r=0}^\infty \frac{(-c)^r\left(2r \mathtt{k} +2{\nu}\right)- \nu }{ \Gamma_{\mathtt{k}}(r \mathtt{k}+ \nu+ \mathtt{k}) r!} \left(\frac{x}{2}\right)^{2r+\frac{\nu}{\mathtt{k}}}\\
&=- \frac{\nu}{\mathtt{k}}\sum_{r=0}^\infty \frac{(-c)^r}{ \Gamma_{\mathtt{k}}(r \mathtt{k}+ \nu+ \mathtt{k}) r!} \left(\frac{x}{2}\right)^{2r+\frac{\nu}{\mathtt{k}}}
+2\sum_{r=0}^\infty \frac{(-c)^r\left(r \mathtt{k} +{\nu}\right)}{ \Gamma_{\mathtt{k}}(r \mathtt{k}+ \nu+ \mathtt{k}) r!} \left(\frac{x}{2}\right)^{2r+\frac{\nu}{\mathtt{k}}}\\
&=- \frac{\nu}{\mathtt{k}}  \mathtt{W}_{\nu, c}^{\mathtt{k}} (x)
+ \frac{x}{\mathtt{k}} \sum_{r=0}^\infty \frac{(-c)^r}{ \Gamma_{\mathtt{k}}(r \mathtt{k}+ \nu- \mathtt{k} + \mathtt{k}) r!} \left(\frac{x}{2}\right)^{2r+\frac{\nu-\mathtt{k}}{\mathtt{k}}}\\
&=- \frac{\nu}{\mathtt{k}}  \mathtt{W}_{\nu, c}^{\mathtt{k}} (x)
+ \frac{x}{\mathtt{k}} \mathtt{W}_{\nu-\mathtt{k}, c}^{\mathtt{k}} (x).
\end{align*}
This give us the second difference equation as
\begin{align}\label{rr-2}
x \frac{d}{dx}\mathtt{W}_{\nu, c}^{\mathtt{k}} (x)
=\frac{x}{\mathtt{k}} \mathtt{W}_{\nu-\mathtt{k}, c}^{\mathtt{k}} (x)
- \frac{\nu}{\mathtt{k}}  \mathtt{W}_{\nu, c}^{\mathtt{k}} (x).
\end{align}
Thus \eqref{rr-1} and \eqref{rr-2} leads to the following recurrence relations.
\begin{proposition}
Let $\mathtt{k}>0$ and $\nu > -\mathtt{k}$. Then
\begin{align}\label{rr-3}
2 \nu\mathtt{W}_{\nu, c}^{\mathtt{k}}(x)
&=x  \mathtt{W}_{\nu-\mathtt{k}, c}^{\mathtt{k}} (x)
+ xc \mathtt{k} \mathtt{W}_{\nu+\mathtt{k}, c}^{\mathtt{k}}(x),\\
\label{rr-7}
\mathtt{W}^\mathtt{k}_{\nu-\mathtt{k}, c}(x)&=\frac{2}{x} \sum_{r=0}^\infty (-1)^r (\nu+ 2 r \mathtt{k}) \mathtt{W}^{\mathtt{k}}_{\nu+ 2r \mathtt{k}, c}(x)\\ \label{rr-5}
\frac{d}{dx} \left( x^{\frac{\nu}{\mathtt{k}}} \mathtt{W}^\mathtt{k}_{\nu, c}(x)\right)&=\frac{x^{\frac{\nu}{\mathtt{k}}}}{\mathtt{k}} \mathtt{W}^\mathtt{k}_{\nu-\mathtt{k}, c}(x)\\ \label{rr-6}
\frac{d}{dx} \left( x^{-\frac{\nu}{\mathtt{k}}} \mathtt{W}^\mathtt{k}_{\nu, c}(x)\right)&=-c x^{-\frac{\nu}{\mathtt{k}}} \mathtt{W}^\mathtt{k}_{\nu+\mathtt{k}, c}(x)\\  \label{rr-8}
 \frac{d^m}{dx^m}\left( \mathtt{W}^{\mathtt{k}}_{\nu, c}(x)\right)
&= \frac{1}{2^m \mathtt{k}^m} \sum_{n=0}^m (-1)^n
\left(
  \begin{array}{c}
    m \\
    n \\
  \end{array}
\right) c^n \mathtt{k}^n \mathtt{W}^{\mathtt{k}}_{\nu-m \mathtt{k}+2 n \mathtt{k}, c}(x) \quad \text{for all} \quad m \in \mathbb{N}.
\end{align}
\end{proposition}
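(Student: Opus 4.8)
The plan is to derive all five identities from the two first-order difference equations \eqref{rr-1} and \eqref{rr-2}, using the common quantity $x\,\frac{d}{dx}\mathtt{W}^\mathtt{k}_{\nu,c}$ as the pivot to be either eliminated or differentiated. First I would obtain \eqref{rr-3} by subtracting \eqref{rr-1} from \eqref{rr-2}: since both right-hand sides equal $x\,\frac{d}{dx}\mathtt{W}^\mathtt{k}_{\nu,c}(x)$, their difference $\frac{x}{\mathtt{k}}\mathtt{W}^\mathtt{k}_{\nu-\mathtt{k},c}-\frac{\nu}{\mathtt{k}}\mathtt{W}^\mathtt{k}_{\nu,c}-\frac{\nu}{\mathtt{k}}\mathtt{W}^\mathtt{k}_{\nu,c}+xc\,\mathtt{W}^\mathtt{k}_{\nu+\mathtt{k},c}$ vanishes; collecting the two equal terms and multiplying through by $\mathtt{k}$ yields \eqref{rr-3} exactly.

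Next, \eqref{rr-5} and \eqref{rr-6} are product-rule computations. For \eqref{rr-5}, expand $\frac{d}{dx}\bigl(x^{\nu/\mathtt{k}}\mathtt{W}^\mathtt{k}_{\nu,c}\bigr)=x^{\nu/\mathtt{k}-1}\bigl(\tfrac{\nu}{\mathtt{k}}\mathtt{W}^\mathtt{k}_{\nu,c}+x\,\frac{d}{dx}\mathtt{W}^\mathtt{k}_{\nu,c}\bigr)$ and invoke \eqref{rr-2}, which says precisely that the bracket equals $\frac{x}{\mathtt{k}}\mathtt{W}^\mathtt{k}_{\nu-\mathtt{k},c}$; \eqref{rr-6} is the mirror computation for $x^{-\nu/\mathtt{k}}\mathtt{W}^\mathtt{k}_{\nu,c}$ using \eqref{rr-1}. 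Adding \eqref{rr-1} and \eqref{rr-2} and dividing by $2x$ then gives the single-derivative formula
\begin{align}\label{plan-deriv}
\frac{d}{dx}\mathtt{W}^\mathtt{k}_{\nu,c}(x)=\frac{1}{2\mathtt{k}}\Bigl(\mathtt{W}^\mathtt{k}_{\nu-\mathtt{k},c}(x)-c\mathtt{k}\,\mathtt{W}^\mathtt{k}_{\nu+\mathtt{k},c}(x)\Bigr),
\end{align}
which is the $m=1$ instance of \eqref{rr-8}.

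Identity \eqref{rr-8} then follows by induction on $m$. Assuming the formula for $m$, I would differentiate each summand $\mathtt{W}^\mathtt{k}_{\nu-m\mathtt{k}+2n\mathtt{k},c}$ via \eqref{plan-deriv}, which replaces it by a combination of $\mathtt{W}^\mathtt{k}_{\nu-(m+1)\mathtt{k}+2n\mathtt{k},c}$ and $\mathtt{W}^\mathtt{k}_{\nu-(m+1)\mathtt{k}+2(n+1)\mathtt{k},c}$. Reindexing the second family by $n\mapsto n-1$ and merging the two sums, the coefficient of each $\mathtt{W}^\mathtt{k}_{\nu-(m+1)\mathtt{k}+2n\mathtt{k},c}$ becomes $(-1)^n(c\mathtt{k})^n\bigl[\binom{m}{n}+\binom{m}{n-1}\bigr]=(-1)^n(c\mathtt{k})^n\binom{m+1}{n}$ by Pascal's rule, while the prefactor $1/(2^m\mathtt{k}^m)$ acquires the extra $1/(2\mathtt{k})$, completing the step.

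Finally, \eqref{rr-7} is obtained by iterating the three-term recurrence \eqref{rr-3}. Solving \eqref{rr-3} for $\mathtt{W}^\mathtt{k}_{\nu-\mathtt{k},c}$ and repeatedly substituting the analogous expression (with $\nu$ raised by $2\mathtt{k}$ at each stage) for the higher-order term produces, after $N$ substitutions, a finite partial sum $\frac{2}{x}\sum_{r=0}^{N-1}(-c\mathtt{k})^r(\nu+2r\mathtt{k})\mathtt{W}^\mathtt{k}_{\nu+2r\mathtt{k},c}$ together with a remainder built from $\mathtt{W}^\mathtt{k}_{\nu+(2N-1)\mathtt{k},c}$, and the series identity follows once this remainder is shown to tend to $0$ as $N\to\infty$. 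I expect this tail estimate to be the main obstacle: it requires controlling $\mathtt{W}^\mathtt{k}_{\mu,c}(x)$ for fixed $x$ as the order $\mu\to\infty$, which from the series \eqref{eqn-1} reduces to bounding the leading term $\bigl(x/2\bigr)^{\mu/\mathtt{k}}/\Gamma_\mathtt{k}(\mu+\mathtt{k})$ and showing, via the rapid growth of the $\mathtt{k}$-gamma function in its argument, that it decays fast enough to suppress the accumulated coefficients. The remaining four identities are purely algebraic and carry no such convergence issue.
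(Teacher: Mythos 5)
Your proposal is correct, and for \eqref{rr-3}, \eqref{rr-7} and \eqref{rr-8} it coincides with the paper's own proof: \eqref{rr-3} by equating the two expressions \eqref{rr-1} and \eqref{rr-2} for $x\frac{d}{dx}\mathtt{W}^{\mathtt{k}}_{\nu,c}(x)$, \eqref{rr-7} by iterating the three-term recurrence with the alternating multipliers $(-c\mathtt{k})^{r}$, and \eqref{rr-8} by induction whose base case is the sum of \eqref{rr-1} and \eqref{rr-2} and whose step is closed by Pascal's rule (your write-up of the induction is in fact cleaner than the paper's, which has several slips such as $\frac{d}{dr}$ for $\frac{d}{dx}$ and stray signs). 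You genuinely differ in two places, both to your advantage. For \eqref{rr-5} and \eqref{rr-6} the paper re-differentiates the power series of $x^{\pm\nu/\mathtt{k}}\mathtt{W}^{\mathtt{k}}_{\nu,c}(x)$ term by term, whereas you obtain both in one line from the product rule together with \eqref{rr-2}, respectively \eqref{rr-1}; since those recurrences were themselves extracted from the series, your route avoids duplicating that computation. For \eqref{rr-7} the paper simply says ``continue like the above and addition them'', discarding the remainder of the telescoping without comment, while you correctly isolate the tail, a multiple of $(c\mathtt{k})^{N}\,\mathtt{W}^{\mathtt{k}}_{\nu+(2N-1)\mathtt{k},c}(x)$, and observe it must be shown to vanish; your sketch does work, e.g.\ from $\Gamma_{\mathtt{k}}(r\mathtt{k}+\mu+\mathtt{k})\geq\Gamma_{\mathtt{k}}(\mu+\mathtt{k})$ for $\mu\geq 1-\mathtt{k}$ one gets $|\mathtt{W}^{\mathtt{k}}_{\mu,c}(x)|\leq (x/2)^{\mu/\mathtt{k}}e^{|c|x^{2}/4}/\Gamma_{\mathtt{k}}(\mu+\mathtt{k})$, and the factorial-type growth of $\mu\mapsto\Gamma_{\mathtt{k}}(\mu+\mathtt{k})$ dominates the geometric factors $(c\mathtt{k})^{N}$ and $(x/2)^{2N/\mathtt{k}}$ as well as the linear factor $\nu+2N\mathtt{k}$, so on this point your treatment is more complete than the paper's. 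One discrepancy you should make explicit rather than leave implicit: your iteration produces the coefficient $(-c\mathtt{k})^{r}$, and so does the paper's own displayed computation, whereas the statement \eqref{rr-7} prints $(-1)^{r}$; the printed identity holds only when $c\mathtt{k}=1$, so what you prove is the corrected general form, and \eqref{rr-7} as stated contains a typo.
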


\begin{proof}
The relation \eqref{rr-3} follows by subtracting \eqref{rr-2} from \eqref{rr-1}.

Next to establish \eqref{rr-7}, lets rewrite \eqref{rr-3} as
\begin{align}\label{rr-9}
 \mathtt{W}_{\nu-\mathtt{k}, c}^{\mathtt{k}} (x)
+ c \mathtt{k} \mathtt{W}_{\nu+\mathtt{k}, c}^{\mathtt{k}}(x)
= 2 \frac{\nu}{x}\mathtt{W}_{\nu, c}^{\mathtt{k}}(x)
\end{align}
Now multiply both side of \eqref{rr-9} by $-c \mathtt{k}$ and replace $\nu$ by $\nu+2\mathtt{k}$. Then we have
\begin{align}
- c \mathtt{k} \mathtt{W}_{\nu+\mathtt{k}, c}^{\mathtt{k}} (x)
-c^2 \mathtt{k}^2 \mathtt{W}_{\nu+3 \mathtt{k}, c}^{\mathtt{k}}(x)
= -2 c\mathtt{k} \frac{\nu+2 \mathtt{k}}{x}\mathtt{W}_{\nu+2 \mathtt{k}, c}^{\mathtt{k}}(x).
\end{align}
Similarly, multiplying  both side of \eqref{rr-9} by $c^2 \mathtt{k}^2$ and replacing $\nu$ by $\nu+4\mathtt{k}$, we get
\begin{align}
 c^2 \mathtt{k}^2 \mathtt{W}_{\nu+3 \mathtt{k}, c}^{\mathtt{k}} (x)
+c^3 \mathtt{k}^3 \mathtt{W}_{\nu+5 \mathtt{k}, c}^{\mathtt{k}}(x)
= 2 c^2\mathtt{k}^2 \frac{\nu+4 \mathtt{k}}{x}\mathtt{W}_{\nu+4 \mathtt{k}, c}^{\mathtt{k}}(x).
\end{align}
If we continue like the above and addition them  leads to \eqref{rr-7}.

From the definition \eqref{eqn-1} it is clear that
\begin{align}\label{rr-10}
x^{\frac{\nu}{\mathtt{k}}}\mathtt{W}_{\nu, c}^{\mathtt{k}} (x) =\sum_{r=0}^\infty \frac{(-c)^r }{ \Gamma_{\mathtt{k}}(r \mathtt{k}+ \nu+ \mathtt{k})2^{2r+\frac{\nu}{\mathtt{k}}} r!} \left(x\right)^{2r+\frac{2\nu}{\mathtt{k}}}.
\end{align}
The derivative of \eqref{rr-10} with respect to $x$ yields
\begin{align*}
\frac{d}{dx}\left(x^{\frac{\nu}{\mathtt{k}}}\mathtt{W}_{\nu, c}^{\mathtt{k}} (x) \right) &=\sum_{r=0}^\infty \frac{(-c)^r (2r+\frac{2\nu}{\mathtt{k}}) }{ \Gamma_{\mathtt{k}}(r \mathtt{k}+ \nu+ \mathtt{k})2^{2r+\frac{\nu}{\mathtt{k}}} r!} \left(x\right)^{2r+\frac{2\nu}{\mathtt{k}}-1}\\
&=\frac{x^\frac{\nu}{\mathtt{k}}}{\mathtt{k}}\sum_{r=0}^\infty \frac{(-c)^r }{ \Gamma_{\mathtt{k}}(r \mathtt{k}+ \nu) r!} \left(\frac{x}{2}\right)^{2r+\frac{\nu}{\mathtt{k}}-1}
=\frac{x^\frac{\nu}{\mathtt{k}}}{\mathtt{k}}\mathtt{W}_{\nu-\mathtt{k}, c}^{\mathtt{k}} (x).
\end{align*}
Similarly,
\begin{align*}
\frac{d}{dx}\left(x^{-\frac{\nu}{\mathtt{k}}}\mathtt{W}_{\nu, c}^{\mathtt{k}} (x) \right) &=\sum_{r=1}^\infty \frac{(-c)^r 2r }{ \Gamma_{\mathtt{k}}(r \mathtt{k}+ \nu+ \mathtt{k})2^{2r+\frac{\nu}{\mathtt{k}}} r!} \left(x\right)^{2r-1}\\
&=x^{-\frac{\nu}{\mathtt{k}}}\sum_{r=1}^\infty \frac{(-c)^r }{ \Gamma_{\mathtt{k}}(r \mathtt{k}+ \nu+\mathtt{k}) (r-1)!} \left(\frac{x}{2}\right)^{2r+\frac{\nu}{\mathtt{k}}-1}\\
&=x^{-\frac{\nu}{\mathtt{k}}}\sum_{r=0}^\infty \frac{(-c)^{r+1} }{ \Gamma_{\mathtt{k}}(r \mathtt{k}+ \nu+2\mathtt{k}) r!} \left(\frac{x}{2}\right)^{2r+\frac{\nu}{\mathtt{k}}+1}
=-c x^{-\frac{\nu}{\mathtt{k}}} \mathtt{W}_{\nu+\mathtt{k}, c}^{\mathtt{k}} (x).
\end{align*}

The identity \eqref{rr-8} can be proved by using mathematical induction on $m$. Recall that
\[ \left(
  \begin{array}{c}
    r \\
    r \\
  \end{array}
\right)=\left(
  \begin{array}{c}
    r \\
    0 \\
  \end{array}
\right)=1
\quad \text{and} \quad \left(
  \begin{array}{c}
    r \\
    n \\
  \end{array}
\right)+\left(
  \begin{array}{c}
    r \\
    n-1 \\
  \end{array}
\right)= \left(
  \begin{array}{c}
    r+1 \\
    n \\
  \end{array}
\right)
\]
For $m=1$, the proof of the identity \eqref{rr-8} is equivalent to show that
\begin{align}\label{rr-4}
2 \mathtt{k} \frac{d}{dx}\mathtt{W}_{\nu, c}^{\mathtt{k}} (x)
&= \mathtt{W}_{\nu-\mathtt{k}, c}^{\mathtt{k}} (x)
- c \mathtt{k} \mathtt{W}_{\nu+\mathtt{k}, c}^{\mathtt{k}}(x).
\end{align}
This above relation can be obtained by simple adding \eqref{rr-1} and \eqref{rr-2}. Thus, identity \eqref{rr-8} hold for $m=1$.

Assume that the identity \eqref{rr-8} also holds for any $m=r \geq 2$, i.e.
\begin{align*}
 \frac{d^r}{dx^r}\left( \mathtt{W}^{\mathtt{k}}_{\nu, c}(x)\right)
&= \frac{1}{2^m \mathtt{k}^r} \sum_{n=0}^r (-1)^n
\left(
  \begin{array}{c}
    r \\
    n \\
  \end{array}
\right) c^n \mathtt{k}^n \mathtt{W}^{\mathtt{k}}_{\nu-r \mathtt{k}+2 n \mathtt{k}, c}(x).
\end{align*}
This implies for $m=r+1$,
\begin{align*}
 \frac{d^{r+1}}{dx^{r+1}}\left( \mathtt{W}^{\mathtt{k}}_{\nu, c}(x)\right)
&= \frac{1}{2^r \mathtt{k}^r} \sum_{n=0}^r (-1)^n
\left(
  \begin{array}{c}
    r \\
   n \\
 \end{array}
\right) c^n \mathtt{k}^n \frac{d}{dr}\mathtt{W}^{\mathtt{k}}_{\nu-r \mathtt{k}+2 n \mathtt{k}, c}(x).\\
&= \frac{1}{2^{r+1} \mathtt{k}^{r+1}} \sum_{n=0}^r (-1)^n
\left(
  \begin{array}{c}
    r \\
    n \\
  \end{array}
\right) c^n \mathtt{k}^n \bigg(\mathtt{W}^{\mathtt{k}}_{\nu-(r+1) \mathtt{k}+2 n \mathtt{k}, c}(x)-c \mathtt{k}\mathtt{W}^{\mathtt{k}}_{\nu-(r-1) \mathtt{k}+2 n \mathtt{k}, c}(x)\bigg).\\
&= \frac{1}{2^{r+1} \mathtt{k}^{r+1}} \sum_{n=0}^r (-1)^n
\left(
  \begin{array}{c}
    r \\
    n \\
  \end{array}
\right) c^n \mathtt{k}^n \mathtt{W}^{\mathtt{k}}_{\nu-(r+1) \mathtt{k}+2 n \mathtt{k}, c}(x)\\
& \quad\quad \quad  -
\frac{1}{2^{r+1} \mathtt{k}^{r+1}} \sum_{n=0}^r (-1)^n
\left(
  \begin{array}{c}
    r \\
    n \\
  \end{array}
\right) c^{n+1} \mathtt{k}^{n+1}\mathtt{W}^{\mathtt{k}}_{\nu-(r-1) \mathtt{k}+2 n \mathtt{k}, c}(x).\\
&=\frac{1}{2^{r+1} \mathtt{k}^{r+1}}
 \bigg[ \mathtt{W}^{\mathtt{k}}_{\nu-(r+1) \mathtt{k}, c}(x)
 +\sum_{n=1}^r (-1)^r \left(\left(
  \begin{array}{c}
    r \\
    n \\
  \end{array}
\right)+\left(
  \begin{array}{c}
    r \\
    n-1 \\
  \end{array}
\right)\right)\mathtt{W}^{\mathtt{k}}_{\nu-(r+1) \mathtt{k}+2n \mathtt{k}, c}(x)
 \bigg.\\
 &\quad \quad \quad \quad \quad \quad \quad\bigg.-(-1)^r c^{r+1}\mathtt{k}^{r+1}\mathtt{W}^{\mathtt{k}}_{\nu+(r+1) \mathtt{k}, c}(x)\bigg]\\
&=\frac{1}{2^{r+1} \mathtt{k}^{r+1}}
 \bigg[ \left(
  \begin{array}{c}
    r+1 \\
    0 \\
  \end{array}
\right)\mathtt{W}^{\mathtt{k}}_{\nu-(r+1) \mathtt{k}, c}(x)
 +\sum_{n=1}^r (-1)^r \left(
  \begin{array}{c}
    r+1 \\
    n \\
  \end{array}
\right)\mathtt{W}^{\mathtt{k}}_{\nu-(r+1) \mathtt{k}+2n \mathtt{k}, c}(x)\bigg.\\
 &\quad \quad \quad \quad\quad \quad\bigg.+(-1)^{r+1} \left(
  \begin{array}{c}
    r+1 \\
    r+1 \\
  \end{array}
\right) c^{r+1}\mathtt{k}^{r+1}\mathtt{W}^{\mathtt{k}}_{\nu-(r+1) \mathtt{k}+2(r+1) \mathtt{k}, c}(x)\bigg]\\
&=\frac{1}{2^{r+1} \mathtt{k}^{r+1}}
 \sum_{n=0}^{r+1} (-1)^r \left(
  \begin{array}{c}
    r+1 \\
    n \\
  \end{array}
\right)\mathtt{W}^{\mathtt{k}}_{\nu-(r+1) \mathtt{k}+2n \mathtt{k}, c}(x).
 \end{align*}
 Hence the conclusion by mathematical induction on $m$.
\end{proof}

\subsection{Integral representation}
Now we will derive two type integral representation of the functions $\mathtt{W}_{\nu, c}^{\mathtt{k}}$. For this purpose we need to recall  the $\mathtt{k}$-Beta functions from \cite{Diaz}. The $\mathtt{k}$ version of the beta functions is defined by
\begin{align}\label{eqn-6}
\mathtt{B}_{\mathtt{k}}(x, y)=\frac{ \Gamma_{\mathtt{k}}(x) \Gamma_{\mathtt{k}}(y)}{\Gamma_{\mathtt{k}}(x+y)}=\frac{1}{\mathtt{k}} \int_0^1 t^{\frac{x}{\mathtt{k}}-1}(1-t)^{\frac{y}{\mathtt{k}}-1} dt.
\end{align}
Substituting $t$ by $t^2$  on the right hand integration in \eqref{eqn-6}, it follows that
\begin{align}\label{eqn-7}
\mathtt{B}_{\mathtt{k}}(x, y)=\frac{2}{\mathtt{k}} \int_0^1 t^{\frac{2x}{\mathtt{k}}-1}(1-t^2)^{\frac{y}{\mathtt{k}}-1} dt.
\end{align}
Let, $x=(r+1) \mathtt{k}$ and $y=\nu$. Then from \eqref{eqn-6} and \eqref{eqn-7}, we have
\begin{align}\label{eqn-8}
\frac{ 1}{\Gamma_{\mathtt{k}}(r \mathtt{k}+\nu+\mathtt{k})}=\frac{2}{\Gamma_{\mathtt{k}}((r+1) \mathtt{k}) \Gamma_{\mathtt{k}}(\nu)} \int_0^1 t^{2r+1}(1-t^2)^{\frac{\nu}{\mathtt{k}}-1} dt.
\end{align}
Owing to \cite{Diaz}, we have the identity $\Gamma_{\mathtt{k}}(\mathtt{k} x)= \mathtt{k}^{x-1} \Gamma(x)$. This gives
\begin{align}\label{eqn-9}
\frac{ 1}{\Gamma_{\mathtt{k}}(r \mathtt{k}+\nu+\mathtt{k})}=\frac{2}{\mathtt{k}^{r}\Gamma(r+1) \Gamma_{\mathtt{k}}(\nu)} \int_0^1 t^{2r+1}(1-t^2)^{\frac{\nu}{\mathtt{k}}-1} dt.
\end{align}
Now \eqref{eqn-1} and \eqref{eqn-9} together yield
\begin{align}\label{eqn-10}\notag
\mathtt{W}_{\nu, c}^{\mathtt{k}} (x)& =\frac{2}{\Gamma_{\mathtt{k}}(\nu)} \left(\frac{x }{2}\right)^{\frac{\nu}{\mathtt{k}}}\int_0^1  t (1-t^2)^{\frac{\nu}{\mathtt{k}}-1} \sum_{r=0}^\infty \frac{(-c)^r }{ \Gamma(r+1)   r!} \left(\frac{x t}{2 \sqrt{\mathtt{k}}}\right)^{2r} dt.\\
&=\frac{2}{\Gamma_{\mathtt{k}}(\nu)} \left(\frac{x }{2}\right)^{\frac{\nu}{\mathtt{k}}}\int_0^1  t (1-t^2)^{\frac{\nu}{\mathtt{k}}-1} \mathcal{W}_{0, 1, c} \left(\frac{x t}{\sqrt{\mathtt{k}}}\right) dt,
\end{align}
where $\mathcal{W}_{p, b, c}$  is defined in \eqref{gbf-Baricz}.

Now for the second integral representation, substitute $x= r+\mathtt{k}/2$ and
$y= \nu+\mathtt{k}/2$ in \eqref{eqn-7}. Then, \eqref{eqn-8} can be rewrite as
\begin{align}\label{eqn-11}
\frac{ 1}{\Gamma_{\mathtt{k}}(r \mathtt{k}+\nu+\mathtt{k})}=\frac{2}{\Gamma_{\mathtt{k}}\left(\left(r+\frac{1}{2}\right) \mathtt{k}\right) \Gamma_{\mathtt{k}}\left(\nu+\frac{\mathtt{k}}{2}\right) } \int_0^1 t^{2r}(1-t^2)^{\frac{\nu}{\mathtt{k}}-\frac{1}{2}} dt.
\end{align}
Again the identity $\Gamma_{\mathtt{k}}(\mathtt{k} x)= \mathtt{k}^{x-1} \Gamma(x)$ yields
\begin{align}\label{eqn-12}
\Gamma_{\mathtt{k}}\left(\left(r+\frac{1}{2}\right) \mathtt{k}\right)
=\mathtt{k}^{r-\frac{1}{2}}\Gamma\left(r+\frac{1}{2}\right).
\end{align}
Further, the Legendre  duplication formula (see \cite{ABRAMOWITZ,Andrews-Askey})
\begin{equation}\label{eqn:Legendre-duplication}
\mathrm{\Gamma}{(z)}\mathrm{\Gamma}{\left(z+\tfrac{1}{2}\right)}= 2^{1-2z}\; \sqrt{\pi}\; \mathrm{\Gamma}{(2z)}
\end{equation}
shows that
\[\Gamma\left(r+\frac{1}{2}\right) r!= r \Gamma\left(r+\frac{1}{2}\right)\Gamma(r)= \frac{\sqrt{\pi} (2r)!}{2^{2r}}.\]
This along with \eqref{eqn-11} and \eqref{eqn-12} reduce the series of $\mathtt{W}^{\mathtt{k}}_{\nu, c}$ as
\begin{align}\label{eqn-13}\notag
\mathtt{W}_{\nu, c}^{\mathtt{k}} (x)& =\frac{2\sqrt{\mathtt{k}}}{\Gamma_{\mathtt{k}}\left(\nu+\frac{\mathtt{k}}{2}\right)} \left(\frac{x }{2}\right)^{\frac{\nu}{\mathtt{k}}}\int_0^1   (1-t^2)^{\frac{\nu}{\mathtt{k}}-\frac{1}{2}} \sum_{r=0}^\infty \frac{(-c)^r }{ \Gamma(r+1)   r!} \left(\frac{x t}{2 \sqrt{\mathtt{k}}}\right)^{2r} dt.\\
&=\frac{2\sqrt{\mathtt{k}}}{\sqrt{\pi}\Gamma_{\mathtt{k}}\left(\nu+\frac{\mathtt{k}}{2}\right)} \left(\frac{x }{2}\right)^{\frac{\nu}{\mathtt{k}}}\int_0^1   (1-t^2)^{\frac{\nu}{\mathtt{k}}-\frac{1}{2}} \sum_{r=0}^\infty \frac{(-c)^r }{ (2r)!} \left(\frac{x t}{ \sqrt{\mathtt{k}}}\right)^{2r} dt.
\end{align}
Finally for $c=\pm \alpha^2$, $\alpha\in \mathbb{R}$, the representation \eqref{eqn-13} respectively leads to
\begin{align}\label{int-14}
\mathtt{W}_{\nu, \alpha^2}^{\mathtt{k}} (x) =\frac{2\sqrt{\mathtt{k}}}{\sqrt{\pi}\Gamma_{\mathtt{k}}\left(\nu+\frac{\mathtt{k}}{2}\right)} \left(\frac{x }{2}\right)^{\frac{\nu}{\mathtt{k}}}\int_0^1   (1-t^2)^{\frac{\nu}{\mathtt{k}}-\frac{1}{2}} \cos\left(\frac{\alpha x t}{ \sqrt{\mathtt{k}}}\right) dt.
\end{align}
and
\begin{align}\label{int-15}
\mathtt{W}_{\nu, -\alpha^2}^{\mathtt{k}} (x) =\frac{2\sqrt{\mathtt{k}}}{\sqrt{\pi}\Gamma_{\mathtt{k}}\left(\nu+\frac{\mathtt{k}}{2}\right)} \left(\frac{x }{2}\right)^{\frac{\nu}{\mathtt{k}}}\int_0^1   (1-t^2)^{\frac{\nu}{\mathtt{k}}-\frac{1}{2}} \cosh\left(\frac{\alpha x t}{ \sqrt{\mathtt{k}}}\right) dt.
\end{align}
If $\nu=\mathtt{k}/2$, then from \eqref{int-14} a computation give the relation between sine functions and generalized $\mathtt{k}$-Bessel functions by
\[\sin\left(\frac{\alpha x}{\sqrt{\mathtt{k}}}\right)= \frac{\alpha}{\mathtt{k}}\sqrt{\frac{\pi x}{2}}\mathtt{W}_{\frac{\nu}{\mathtt{k}}, \alpha^2}^{\mathtt{k}} (x).\]
Similarly, the relation
\[\sinh\left(\frac{\alpha x}{\sqrt{\mathtt{k}}}\right)= \frac{\alpha}{\mathtt{k}}\sqrt{\frac{\pi x}{2}}\mathtt{W}_{\frac{\nu}{\mathtt{k}}, -\alpha^2}^{\mathtt{k}} (x)\]
 can be derive from \eqref{int-15}.

\section{Monotonicity and log-convexity properties}\label{sec-3}  This section is devoted for the
discussion of the monotonicity and the log-convexity properties of the modified $\mathtt{k}$-Bessel function $\mathtt{W}_{\nu, -1}^{\mathtt{k}}=\mathtt{I}_\nu^{\mathtt{k}}$. As  consequences of those results, we derive several functional inequalities for $\mathtt{I}_\nu^{\mathtt{k}}$.

The following result of Biernacki and Krzy\.z \cite{Biernacki-Krzy} will be required.
\begin{lemma}\label{lemma:1}\cite{Biernacki-Krzy}
Consider the power series $f(x)=\sum_{k=0}^\infty a_k x^k$ and $g(x)=\sum_{k=0}^\infty b_k x^k$, where $a_k \in \mathbb{R}$ and $b_k > 0$ for all $k$. Further suppose that both series converge on $|x|<r$. If the sequence $\{a_k/b_k\}_{k\geq 0}$ is increasing (or decreasing), then the function $x \mapsto f(x)/g(x)$ is also increasing (or decreasing) on $(0,r)$.
\end{lemma}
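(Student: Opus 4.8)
The plan is to reduce the monotonicity of $f/g$ to a sign condition on the power-series coefficients of the numerator of its derivative. First I would observe that since $b_k>0$ for every $k$ and $x\in(0,r)$, each term $b_k x^k$ is positive, so $g(x)>0$ on $(0,r)$; in particular $g$ does not vanish there. Consequently $f/g$ is differentiable on $(0,r)$ and
\[
\left(\frac{f}{g}\right)'(x)=\frac{f'(x)g(x)-f(x)g'(x)}{g(x)^2},
\]
so the sign of $(f/g)'$ is governed entirely by the numerator $N(x):=f'(x)g(x)-f(x)g'(x)$. It therefore suffices to show $N(x)\ge 0$ on $(0,r)$ in the increasing case (and $N(x)\le 0$ in the decreasing case).

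Next I would expand $N$ as a power series. Term-by-term differentiation is legitimate inside the common radius of convergence $r$, and the Cauchy product of the absolutely convergent series for $f',g$ (and for $f,g'$) is valid there. Writing the coefficient of $x^p$ in $f'g$ and in $fg'$ and subtracting, the indices in both products run over $m+l=p+1$, and one finds
\[
N(x)=\sum_{p=0}^\infty c_p\,x^p,\qquad c_p=\sum_{m+l=p+1}(m-l)\,a_m b_l.
\]
The decisive step is a symmetrization of this coefficient: relabelling $m\leftrightarrow l$ in the sum and averaging the two expressions for $c_p$ gives
\[
2c_p=\sum_{m+l=p+1}(m-l)\bigl(a_m b_l-a_l b_m\bigr)
=\sum_{m+l=p+1}(m-l)\,b_m b_l\left(\frac{a_m}{b_m}-\frac{a_l}{b_l}\right),
\]
where the last equality uses $b_m,b_l>0$ to factor the numerator.

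Finally I would read off the sign. If $\{a_k/b_k\}$ is increasing, then for every pair $(m,l)$ the factors $(m-l)$ and $\bigl(a_m/b_m-a_l/b_l\bigr)$ have the same sign (and the diagonal term $m=l$ vanishes), while $b_m b_l>0$; hence each summand is nonnegative and $c_p\ge 0$ for all $p$. Thus $N(x)=\sum_p c_p x^p\ge 0$ for $x\in(0,r)$, giving $(f/g)'\ge 0$ and the asserted monotonicity; the decreasing case is identical with all inequalities reversed. I expect the main obstacle to be precisely this symmetrization identity: the raw coefficient $c_p=\sum(m-l)a_m b_l$ carries no obvious sign, and it is only after averaging over the transposition $m\leftrightarrow l$ and factoring out $b_m b_l$ that the hypothesis on the monotone ratio $a_k/b_k$ can be brought to bear. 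The remaining points — positivity of $g$ and the justification of term-by-term differentiation and of the Cauchy product — are routine consequences of working strictly inside the radius of convergence.
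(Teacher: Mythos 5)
Your proof is correct and complete: the positivity of $g$ on $(0,r)$, the Cauchy-product expansion $c_p=\sum_{m+l=p+1}(m-l)a_m b_l$, and the symmetrization $2c_p=\sum_{m+l=p+1}(m-l)b_m b_l\left(\frac{a_m}{b_m}-\frac{a_l}{b_l}\right)$ are all valid, and the sign conclusion follows as you state. Note that the paper itself gives no proof of this lemma --- it is imported verbatim from the cited work of Biernacki and Krzy\.z --- and the argument you reconstructed is precisely the classical one from that source, so there is nothing to compare beyond observing that you have supplied the standard missing proof.
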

The above lemma still holds when both $f$ and  $g$ are even, or both are odd functions.\\

We now state and prove our main results in this section.
Consider the functions
\begin{align}\label{normalized-k}
\mathcal{I}_\nu^{\mathtt{k}}(x):= \left(\frac{2}{x}\right)^{\frac{\nu}{\mathtt{k}}}\Gamma_{\mathtt{k}}
     (\nu+\mathtt{k}) \mathtt{I}_\nu^{\mathtt{k}}(x)=\sum_{r=0}^\infty f_r(\nu) x^{2r},
\end{align}
where,
\begin{align}\label{coef-1}
f_r(\nu)= \frac{\Gamma_{\mathtt{k}}{(\nu+\mathtt{k})}}{\Gamma_{\mathtt{k}}{(r{\mathtt{k}}+\nu+\mathtt{k})}4^r r!}.
\end{align}

Then we have the following properties.

\begin{theorem}\label{finalThm} Let $\mathtt{k}>0$. Following results are true for the modified $\mathtt{k}$-Bessel functions.
\begin{enumerate}
\item[(a)] If $\nu \geq \mu >-\mathtt{k}$, then the function $x \mapsto {\mathcal{I}_\mu^{\mathtt{k}}(x)}/
    {\mathcal{I}_\nu^{\mathtt{k}}(x)}$ is increasing on $\mathbb{R}$.
\item[(b)] The function $\nu \mapsto  \mathcal{I}^{\mathtt{k}}_{\nu+\mathtt{k}}(x)
/\mathcal{I}^{\mathtt{k}}_{\nu}(x)$ is increasing on $(-\mathtt{k}, \infty)$
 that is, for $\nu \geq \mu >-\mathtt{k}$, the inequality
 \begin{equation}\label{eqn:thm-3}
\mathcal{I}_{\nu+\mathtt{k}}^{\mathtt{k}}(x)\mathcal{I}_{\mu}^{\mathtt{k}}(x)\geq  \mathcal{I}_{\nu}^{\mathtt{k}}(x)\mathcal{I}_{\mu+\mathtt{k}}^{\mathtt{k}}(x)
\end{equation}
holds for each fixed $x>0$ and $\mathtt{k}>0$.
\item[(c)] The function $\nu \mapsto \mathcal{I}_\nu^{\mathtt{k}}(x)$ is decreasing and log-convex on $(-\mathtt{k}, \infty)$ for each fixed $x >0$.
\end{enumerate}
\end{theorem}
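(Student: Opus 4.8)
The plan is to rely on two ingredients: the Biernacki--Krzy\.z lemma (Lemma~\ref{lemma:1}) for the monotonicity in the variable $x$ in part~(a), and the log-convexity of the individual coefficients $f_r(\nu)$ from \eqref{coef-1} for parts~(c) and~(b). I would establish part~(c) before part~(b), since the monotone ratio asserted in~(b) is a direct consequence of log-convexity in the parameter.

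For part~(a), observe that $\mathcal{I}_\mu^{\mathtt{k}}(x)=\sum_{r\ge 0} f_r(\mu)\,x^{2r}$ and $\mathcal{I}_\nu^{\mathtt{k}}(x)=\sum_{r\ge 0} f_r(\nu)\,x^{2r}$ are even power series with strictly positive coefficients (as $\nu,\mu>-\mathtt{k}$ keeps every $\mathtt{k}$-gamma argument positive), so Lemma~\ref{lemma:1} in its even-function form applies once the quotient sequence $\{f_r(\mu)/f_r(\nu)\}_{r\ge 0}$ is shown to be increasing. In that quotient the factors $4^r r!$ cancel, leaving
\[
\frac{f_r(\mu)}{f_r(\nu)}
=\frac{\Gamma_{\mathtt{k}}(\mu+\mathtt{k})}{\Gamma_{\mathtt{k}}(\nu+\mathtt{k})}\,
\frac{\Gamma_{\mathtt{k}}(r\mathtt{k}+\nu+\mathtt{k})}{\Gamma_{\mathtt{k}}(r\mathtt{k}+\mu+\mathtt{k})}.
\]
Using the functional equation $\Gamma_{\mathtt{k}}(z+\mathtt{k})=z\,\Gamma_{\mathtt{k}}(z)$, the ratio of the $(r+1)$-th term of this sequence to the $r$-th collapses to $(r\mathtt{k}+\nu+\mathtt{k})/(r\mathtt{k}+\mu+\mathtt{k})\ge 1$, which holds precisely because $\nu\ge\mu$. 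Hence the sequence is increasing and part~(a) follows.

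For part~(c), I would show that each coefficient $\nu\mapsto f_r(\nu)$ is simultaneously decreasing and log-convex on $(-\mathtt{k},\infty)$. Taking logarithmic derivatives and writing $\Psi_{\mathtt{k}}=\Gamma_{\mathtt{k}}'/\Gamma_{\mathtt{k}}$ gives
\[
\frac{d}{d\nu}\log f_r(\nu)=\Psi_{\mathtt{k}}(\nu+\mathtt{k})-\Psi_{\mathtt{k}}(r\mathtt{k}+\nu+\mathtt{k}),
\qquad
\frac{d^2}{d\nu^2}\log f_r(\nu)=\Psi_{\mathtt{k}}'(\nu+\mathtt{k})-\Psi_{\mathtt{k}}'(r\mathtt{k}+\nu+\mathtt{k}).
\]
Since $\Psi_{\mathtt{k}}$ is increasing and, by \eqref{def-digamma-2}, $\Psi_{\mathtt{k}}'(t)=\sum_{n\ge 0}(n\mathtt{k}+t)^{-2}$ is decreasing, and since $r\mathtt{k}+\nu+\mathtt{k}\ge\nu+\mathtt{k}$ for $r\ge 0$, the first derivative is $\le 0$ and the second is $\ge 0$. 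Thus each $f_r$ is decreasing and log-convex; because $x^{2r}>0$ and a convergent sum of log-convex functions is again log-convex (by H\"older's inequality), the series $\mathcal{I}_\nu^{\mathtt{k}}(x)=\sum_r f_r(\nu)x^{2r}$ is decreasing (strictly for $x>0$, as $f_0\equiv 1$ while the terms with $r\ge 1$ strictly decrease) and log-convex in $\nu$, which proves~(c).

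Part~(b) then follows from the log-convexity established in~(c): writing $\psi(\nu)=\log\mathcal{I}_\nu^{\mathtt{k}}(x)$, convexity of $\psi$ forces the increment $\psi(\nu+\mathtt{k})-\psi(\nu)$ over the fixed step $\mathtt{k}$ to be nondecreasing in $\nu$, so that $\nu\mapsto \mathcal{I}_{\nu+\mathtt{k}}^{\mathtt{k}}(x)/\mathcal{I}_\nu^{\mathtt{k}}(x)=\exp\!\big(\psi(\nu+\mathtt{k})-\psi(\nu)\big)$ is increasing; cross-multiplying the resulting inequality for $\nu\ge\mu$ yields \eqref{eqn:thm-3}, and the choice $\mu=\nu-\mathtt{k}$ recovers the Tur\'an-type inequality $\mathcal{I}_{\nu+\mathtt{k}}^{\mathtt{k}}(x)\,\mathcal{I}_{\nu-\mathtt{k}}^{\mathtt{k}}(x)\ge\big(\mathcal{I}_\nu^{\mathtt{k}}(x)\big)^2$. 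The main technical point to handle with care is the passage from termwise log-convexity of the $f_r$ to log-convexity of the infinite sum (justifying the H\"older closure argument and the interchange of differentiation and summation on compact subsets of $(-\mathtt{k},\infty)$), together with the elementary but essential observation that increments of a convex function over a fixed step are monotone, which is exactly what converts~(c) into the monotone ratio of~(b).
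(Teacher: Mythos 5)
Your proposal is correct, and parts (a) and (c) coincide with the paper's own arguments: for (a) you apply the Biernacki--Krzy\.z lemma to the even series, and your computation that successive quotients of $\{f_r(\mu)/f_r(\nu)\}$ collapse via $\Gamma_{\mathtt{k}}(z+\mathtt{k})=z\Gamma_{\mathtt{k}}(z)$ to $(r\mathtt{k}+\nu+\mathtt{k})/(r\mathtt{k}+\mu+\mathtt{k})\geq 1$ is exactly the paper's calculation with the ratio inverted; for (c) both you and the paper take one and two logarithmic derivatives of $f_r$, using that $\Psi_{\mathtt{k}}$ is increasing and that $\Psi_{\mathtt{k}}'(t)=\sum_{n\geq 0}(n\mathtt{k}+t)^{-2}$ is decreasing, and then sum the log-convex terms. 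Part (b) is where you genuinely diverge, and your route is the better one. The paper deduces (b) from (a): it differentiates the ratio in $x$ and feeds the result into the recurrence \eqref{rr-6}. But as printed that argument asserts $\frac{d}{dx}\bigl(\mathcal{I}^{\mathtt{k}}_{\nu}(x)/\mathcal{I}^{\mathtt{k}}_{\mu}(x)\bigr)\geq 0$ for $\nu\geq\mu$, whereas part (a) gives precisely the reciprocal ratio $\mathcal{I}^{\mathtt{k}}_{\mu}/\mathcal{I}^{\mathtt{k}}_{\nu}$ as increasing, so \eqref{eqn:xxx} should carry the opposite sign; moreover, after normalization \eqref{rr-6} actually gives $(\mathcal{I}^{\mathtt{k}}_{\nu})'(x)=\frac{x}{2(\nu+\mathtt{k})}\,\mathcal{I}^{\mathtt{k}}_{\nu+\mathtt{k}}(x)$, and the paper silently drops the factors $\frac{1}{\nu+\mathtt{k}}$, $\frac{1}{\mu+\mathtt{k}}$. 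Restoring both the sign and the constants, the chain from (a) yields only $\mathcal{I}^{\mathtt{k}}_{\nu+\mathtt{k}}(x)\,\mathcal{I}^{\mathtt{k}}_{\mu}(x)\leq \frac{\nu+\mathtt{k}}{\mu+\mathtt{k}}\,\mathcal{I}^{\mathtt{k}}_{\mu+\mathtt{k}}(x)\,\mathcal{I}^{\mathtt{k}}_{\nu}(x)$, which does not imply \eqref{eqn:thm-3}. Your derivation of (b) from (c) --- convexity of $\psi(\nu)=\log\mathcal{I}^{\mathtt{k}}_{\nu}(x)$ makes the fixed-step increment $\psi(\nu+\mathtt{k})-\psi(\nu)$ nondecreasing, hence the ratio $\mathcal{I}^{\mathtt{k}}_{\nu+\mathtt{k}}/\mathcal{I}^{\mathtt{k}}_{\nu}$ is increasing --- is self-contained, proves \eqref{eqn:thm-3} in full, bypasses \eqref{rr-6} entirely, and recovers the Tur\'an-type inequality \eqref{eqn-Turan-1} at once; the reordering (c) before (b) is what makes this possible. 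The only overhead is the routine justification of termwise differentiation in $\nu$ on compact subsets of $(-\mathtt{k},\infty)$, which you correctly flag, and which the paper glosses over as well.
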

\begin{proof}
(a). From \eqref{normalized-k} it follows that
\begin{align*}
\frac{\mathcal{I}^{\mathtt{k}}_{\nu}(x)}{\mathcal{I}^{\mathtt{k}}_{\mu}(x)}
=\frac{\sum_{r=0}^\infty f_r(\nu) x^{2r}}{\sum_{r=0}^\infty f_r(\mu) x^{2r}}.
\end{align*}
Denote $w_r:=f_r(\nu)/f_r(\mu)$. Then
\[w_r= \frac{\Gamma_{\mathtt{k}}{(\nu+\mathtt{k})}\Gamma_{\mathtt{k}}
{(r{\mathtt{k}}+\mu+\mathtt{k})}}
{\Gamma_{\mathtt{k}}{(\mu+\mathtt{k})}
\Gamma_{\mathtt{k}}{(r{\mathtt{k}}+\nu+\mathtt{k})}}.\]
Now using the properties $\Gamma_{\mathtt{k}}{(y+\mathtt{k})}=y\Gamma_{\mathtt{k}}{(y)}$
it can be shown that
\begin{align*}
\frac{w_{r+1}}{w_r}&=
\frac{\Gamma_{\mathtt{k}}{(r{\mathtt{k}}+\nu+\mathtt{k})}\Gamma_{\mathtt{k}}
{(r{\mathtt{k}}+\mu+2\mathtt{k})}}
{\Gamma_{\mathtt{k}}{(r{\mathtt{k}}+\mu+\mathtt{k})}
\Gamma_{\mathtt{k}}{(r{\mathtt{k}}+\nu+2\mathtt{k})}}
=\frac{r{\mathtt{k}}+\mu+\mathtt{k}}{r{\mathtt{k}}+\nu+\mathtt{k}}\leq 1,
\end{align*}
in view of $\nu \geq \mu >-\mathtt{k}.$  Now the result follows from the Lemma \ref{lemma:1}. \\

(b). Let $\nu \geq \mu>-\mathtt{k}.$ It follows from part $(a)$ that
\begin{align*}
\frac {d}{dx} \left(\frac{\mathcal{I}^{\mathtt{k}}_{\nu}(x)}{\mathcal{I}^{\mathtt{k}}_{\mu}(x)}\right)\geq 0
\end{align*}
on $(0,\infty).$ Thus
\begin{align}\label{eqn:xxx}
\left(\mathcal{I}^{\mathtt{k}}_{\nu}(x)\right)'
\left(\mathcal{I}^{\mathtt{k}}_{\mu}(x)\right)-
\left(\mathcal{I}^{\mathtt{k}}_{\nu}(x)\right)
\left(\mathcal{I}^{\mathtt{k}}_{\mu}(x)\right)'\geq 0.
\end{align}
It now follows from $(\ref{rr-6})$ that
\begin{align*}
 \frac{x}{2}\bigg(\mathcal{I}^{\mathtt{k}}_{\nu+k}(x)\; \mathcal{I}^{\mathtt{k}}_{\mu}(x)-\mathcal{I}^{\mathtt{k}}_{\mu+k}(x)\; \mathcal{I}^{\mathtt{k}}_{\nu}(x)\bigg) \geq 0,
\end{align*}
whence $\mathcal{I}^{\mathtt{k}}_{\nu+k}/\mathcal{I}^{\mathtt{k}}_{\nu}$ is increasing for $\nu>-\mathtt{k}$ and for some fixed $x >0$.\\

(c). It is clear that for all $\nu >-\mathtt{k}$,
\[f_r(\nu)= \frac{\Gamma_{\mathtt{k}}{(\nu+\mathtt{k})}}
{\Gamma_{\mathtt{k}}{(r{\mathtt{k}}+\nu+\mathtt{k})}4^r r!}>0.\]
A logarithmic differentiation of $f_r(\nu)$ with respect to $\nu$ yields
\begin{align*}
\frac{f_r'(\nu)}{f_r(\nu)}= \Psi_{\mathtt{k}}(\nu+\mathtt{k})-
\Psi_{\mathtt{k}}(r\mathtt{k}+\nu+\mathtt{k})\leq 0,
\end{align*}
in view of the fact that the $\Psi_{\mathtt{k}}$ is increasing functions on $(-\mathtt{k}, \infty).$ This implies that $f_r(\nu)$ is decreasing.

Thus for $\mu \geq \nu >-\mathtt{k}$, it follows that
\begin{align*}
\sum_{r=0}^\infty f_r(\nu) x^{2r} \geq \sum_{r=0}^\infty f_r(\mu) x^{2r},
\end{align*}
which is equivalent to say that the function $\nu \mapsto \mathcal{I}^{\mathtt{k}}_{\nu}$ is decreasing on $(-\mathtt{k}, \infty)$ for some fixed $x >0$. 

The twice logarithmic differentiation of $f_r(\nu)$ yields
\begin{align*}
\frac{\partial^2}{\partial \nu^2}\bigg(\log(f_r(\nu)\bigg)
&=\Psi_k'(\nu+\mathtt{k})-\Psi_k'(r\mathtt{k}+\nu+\mathtt{k})\\
&=\sum_{n=0}^\infty  \left(\frac{1}{(n\mathtt{k}+\nu+\mathtt{k})^2}
- \frac{1}{(n\mathtt{k}+r\mathtt{k}+\nu+\mathtt{k})^2}\right)\\
&=\sum_{n=0}^\infty\frac{r \mathtt{k}(2n \mathtt{k}+ r \mathtt{k}+2 \nu+2\mathtt{k}) }{(n\mathtt{k}+\nu+\mathtt{k})^2(n\mathtt{k}+r\mathtt{k}+\nu+\mathtt{k})^2} \geq 0.
\end{align*}
for all $ \mathtt{k}>0$ and $\nu > -\mathtt{k}$. Thus $ \nu \mapsto f_r(\nu)$ is log-convex on $(-\mathtt{k}, \infty)$. In view of the fact that sums of log-convex functions are also log-convex, it follows that $\mathcal{I}_{\nu}^{\mathtt{k}}$ is log-convex on $(-\mathtt{k}, \infty)$ for each fixed $x>0$. \qedhere
\end{proof}

\begin{remark}
One of the most significance  consequences of the Theorem $\ref{finalThm}$  is the Tur$\acute{\text{a}}$n-type inequality for the function $\mathcal{I}_{\nu}^{\mathtt{k}}$. From the definition of log-convexity, it follows from Theorem $\ref{finalThm}$ (c) that
\begin{align*}
\mathcal{I}_{\alpha \nu_1+(1-\alpha)\nu_2}^{\mathtt{k}}(x) \leq \left(\mathcal{I}_{\nu_1}^{\mathtt{k}}\right)^{\alpha}(x)
\left(\mathcal{I}_{\nu_2}^{\mathtt{k}}\right)^{1-\alpha}(x),
\end{align*}
where $\alpha \in [0,1]$, $\nu_1, \nu_2 > -\mathtt{k}$, and $x >0$. Choose $\alpha=1/2$, and for any $\mathtt{a} \in \mathbb{R}$, let $\nu_1=\nu-\mathtt{a}$ and $\nu_2=\nu+\mathtt{a},$.  Then the  above inequality yields reverse Tur\'an type inequality
\begin{align}\label{eqn-Turan-1}
\left(\mathcal{I}_{\nu}^{\mathtt{k}}(x)\right)^2 - \mathcal{I}_{\nu-\mathtt{a}}^{\mathtt{k}}(x)
\mathcal{I}_{\nu+\mathtt{a}}^{\mathtt{k}}(x)
\leq 0
\end{align}
for any $\nu \geq |a|-\mathtt{k}$.
\end{remark}

Our final result is based on the Chebyshev integral inequality \cite[p. 40]{Mi}, which states the following: suppose $f$ and $g$  are two integrable functions and monotonic in the same sense (either both decreasing or both increasing). Let $q: (a, b) \to \mathbb{R}$ be a positive integrable function. Then
\begin{align}\label{eqn:chebyshev-1}
\left(\int_a^b q(t) f(t) dt\right) \left(\int_a^b q(t) g(t) dt\right) \leq \left(\int_a^b q(t) dt\right) \left(\int_a^b q(t) f(t) g(t) dt\right).
\end{align}
The inequality in \eqref{eqn:chebyshev-1} is reversed if $f$ and $g$ are monotonic but in the opposite sense.

Follwoing function is required
\begin{align}\label{normalized-k-2}
\mathcal{J}_\nu^{\mathtt{k}}(x):= \left(\frac{2}{x}\right)^{\frac{\nu}{\mathtt{k}}}\Gamma_{\mathtt{k}}
     (\nu+\mathtt{k}) \mathtt{J}_\nu^{\mathtt{k}}(x)=\sum_{r=0}^\infty g_r(\nu) x^{2r},
\end{align}
where,
\begin{align}\label{coef-2}
g_r(\nu)= \frac{(-1)^r\Gamma_{\mathtt{k}}{(\nu+\mathtt{k})}}{\Gamma_{\mathtt{k}}{(r{\mathtt{k}}+\nu+\mathtt{k})}4^r r!}.
\end{align}

\begin{theorem} Let $\mathtt{k}>0$. Then for $\nu \in (-3\mathtt{k}/4,- \mathtt{k}/2] \cup [\mathtt{k}/2, \infty) $
\begin{align}\label{chv-1}
\mathcal{I}_{\nu}^{\mathtt{k}}(x)\mathcal{I}_{\nu+\tfrac{\mathtt{k}}{2}}^{\mathtt{k}}(x)
\leq \frac{\sqrt{\mathtt{k}}}{x} \sin\left(\frac{x}{\mathtt{k}}\right)\mathcal{I}_{2\nu+\tfrac{\mathtt{k}}{2}}^{\mathtt{k}}(x).
\end{align}
and
\begin{align}\label{chv-2}
\mathcal{J}_{\nu}^{\mathtt{k}}(x)\mathcal{J}_{\nu+\tfrac{\mathtt{k}}{2}}^{\mathtt{k}}(x)
\leq \frac{\sqrt{\mathtt{k}}}{x} \sinh\left(\frac{x}{\mathtt{k}}\right)\mathcal{J}_{2\nu+\tfrac{\mathtt{k}}{2}}^{\mathtt{k}}(x).
\end{align}
The inequalities in \eqref{chv-1} and \eqref{chv-2} are reversed if $\nu \in (-\mathtt{k}/2, \mathtt{k}/2)$.
\end{theorem}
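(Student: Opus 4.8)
The plan is to derive both \eqref{chv-1} and \eqref{chv-2} from the Chebyshev integral inequality \eqref{eqn:chebyshev-1}, feeding it the hyperbolic/trigonometric representations \eqref{int-15} and \eqref{int-14}. First I would recast the normalized functions in Poisson-type form. Since $\mathtt{I}_\nu^{\mathtt{k}}=\mathtt{W}_{\nu,-1}^{\mathtt{k}}$ is the case $\alpha=1$ of \eqref{int-15}, inserting it into \eqref{normalized-k} and cancelling $(2/x)^{\nu/\mathtt{k}}$ against $(x/2)^{\nu/\mathtt{k}}$ gives, for every admissible index $\mu$,
\[\mathcal{I}_\mu^{\mathtt{k}}(x)=\frac{2\sqrt{\mathtt{k}}\,\Gamma_{\mathtt{k}}(\mu+\mathtt{k})}{\sqrt{\pi}\,\Gamma_{\mathtt{k}}\!\left(\mu+\tfrac{\mathtt{k}}{2}\right)}\int_0^1 (1-t^2)^{\frac{\mu}{\mathtt{k}}-\frac12}\cosh\!\left(\frac{xt}{\sqrt{\mathtt{k}}}\right)dt,\]
and the identical identity with $\cos$ in place of $\cosh$ holds for $\mathcal{J}_\mu^{\mathtt{k}}$ by combining \eqref{int-14} with \eqref{normalized-k-2}.

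Next I would apply \eqref{eqn:chebyshev-1} on $(0,1)$ with the positive weight $q(t)=\cosh(xt/\sqrt{\mathtt{k}})$ and the two power functions
\[f(t)=(1-t^2)^{\frac{\nu}{\mathtt{k}}-\frac12},\qquad g(t)=(1-t^2)^{\frac{\nu+\mathtt{k}/2}{\mathtt{k}}-\frac12}=(1-t^2)^{\frac{\nu}{\mathtt{k}}}.\]
Then $\int_0^1 qf$ and $\int_0^1 qg$ reproduce, up to their $\Gamma_{\mathtt{k}}$ prefactors, the factors $\mathcal{I}_\nu^{\mathtt{k}}$ and $\mathcal{I}_{\nu+\mathtt{k}/2}^{\mathtt{k}}$ on the left of \eqref{chv-1}, the product $fg=(1-t^2)^{2\nu/\mathtt{k}-1/2}$ collapses the remaining integral into a single higher-order $\mathtt{k}$-Bessel factor on the right, and the residual multiplier is
\[\int_0^1 \cosh\!\left(\frac{xt}{\sqrt{\mathtt{k}}}\right)dt=\frac{\sqrt{\mathtt{k}}}{x}\sinh\!\left(\frac{x}{\sqrt{\mathtt{k}}}\right),\]
which is the prefactor appearing in \eqref{chv-1}. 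Folding the three $\Gamma_{\mathtt{k}}$ prefactors into the single constant shown uses only the recurrence $\Gamma_{\mathtt{k}}(y+\mathtt{k})=y\,\Gamma_{\mathtt{k}}(y)$ together with \eqref{eqn-12} and the duplication identity \eqref{eqn:Legendre-duplication}; this is routine but is precisely what pins down the index of the higher-order factor and the numerical constant, so it must be done carefully.

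The direction of the inequality is governed entirely by the shared monotonicity of $f$ and $g$. On $(0,1)$ one has $\tfrac{d}{dt}(1-t^2)^p=-2pt(1-t^2)^{p-1}$, so $t\mapsto(1-t^2)^p$ is decreasing for $p>0$, increasing for $p<0$, and constant for $p=0$; the exponents of $f$ and $g$ vanish exactly at the threshold values of $\nu$ separating the parameter ranges in the statement. When both exponents share a sign, $f$ and $g$ are monotone in the same sense and \eqref{eqn:chebyshev-1} yields \eqref{chv-1} as written; when the signs differ, the reversed Chebyshev inequality applies and \eqref{chv-1} reverses, which is the origin of the dichotomy between the two $\nu$-ranges. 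The endpoints of the admissible interval are then imposed by requiring each of the three weight exponents to exceed $-1$, so that all integrals converge. The companion inequality \eqref{chv-2} follows by the same argument with $\cos$ and $\sin$ replacing $\cosh$ and $\sinh$.

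I expect two genuine obstacles. The first is positivity of the weight in the cosine case: $q(t)=\cos(xt/\sqrt{\mathtt{k}})$ need not stay positive on $(0,1)$ once $x$ is large, so \eqref{eqn:chebyshev-1} does not apply verbatim to \eqref{chv-2}; one must either restrict $x$ to the range where the weight keeps its sign or, preferably, rerun the monotonicity argument at the level of the Maclaurin coefficients via the even-function form of Lemma \ref{lemma:1}, using that $\mathcal{J}_\nu^{\mathtt{k}}$ is even. The second is reconciling the convergence constraint forced by the Poisson integral with the lower part of the stated $\nu$-range: there the integral representation ceases to converge even though the series $\sum_r f_r(\nu)x^{2r}$ still defines $\mathcal{I}_\nu^{\mathtt{k}}$, so the inequality on that sub-interval would have to be recovered by analytic continuation in $\nu$ or by a direct coefficientwise comparison. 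Confirming that the $\Gamma_{\mathtt{k}}$-bookkeeping collapses to exactly the constant and index claimed, and handling these boundary cases, is where the real work lies.
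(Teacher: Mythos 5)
Your proposal follows the same route as the paper: Chebyshev's inequality \eqref{eqn:chebyshev-1} on $[0,1]$ with a $\cos$/$\cosh$ kernel as the positive weight $q$, two powers of $(1-t^2)$ as $f$ and $g$, and the signs of their exponents deciding whether they are monotone in the same or opposite sense. But there is a concrete slip in your choice of $g$. With $g(t)=(1-t^2)^{\nu/\mathtt{k}}$ the product is $fg=(1-t^2)^{2\nu/\mathtt{k}-1/2}$, whose exponent equals $\mu/\mathtt{k}-1/2$ for $\mu=2\nu$, so Chebyshev delivers $\mathcal{I}_{2\nu}^{\mathtt{k}}$ on the right, not the stated $\mathcal{I}_{2\nu+\mathtt{k}/2}^{\mathtt{k}}$; moreover your $g$'s exponent vanishes at $\nu=0$, not $\nu=-\mathtt{k}/2$, so your claimed match between the monotonicity thresholds and the stated parameter ranges also fails. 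The paper instead takes $g(t)=(1-t^2)^{\nu/\mathtt{k}+1/2}$, which does give the right-hand index $2\nu+\mathtt{k}/2$ and the thresholds $\pm\mathtt{k}/2$, but then the second left-hand factor is $\mathcal{I}_{\nu+\mathtt{k}}^{\mathtt{k}}$ rather than the stated $\mathcal{I}_{\nu+\mathtt{k}/2}^{\mathtt{k}}$. No single choice of $g$ matches both sides: the theorem as printed is internally inconsistent (note also the prefactor $\sin(x/\mathtt{k})$ in \eqref{chv-1} against the $\frac{\sqrt{\mathtt{k}}}{x}\sin\left(x/\sqrt{\mathtt{k}}\right)$ that the integration actually produces), so you must state explicitly which corrected inequality you are proving.

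Your two flagged ``obstacles'' are genuine, and they are gaps in the paper's own proof rather than artifacts of your plan. The paper uses $q(t)=\cos\left(xt/\sqrt{\mathtt{k}}\right)$ as the positive weight for \eqref{chv-1} with no restriction on $x$, although positivity fails once $x>\pi\sqrt{\mathtt{k}}/2$; and per \eqref{int-14}, \eqref{int-15} and \eqref{coef-2}, the $\cos$ kernel belongs with $\mathcal{J}^{\mathtt{k}}$, as you pair it, not with $\mathcal{I}^{\mathtt{k}}$ as the paper writes, so the paper's $\mathcal{I}$/$\sin$ and $\mathcal{J}$/$\sinh$ pairings are swapped relative to the representations it derived. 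The paper also equates $\int_0^1 qf\,dt$ with $\mathcal{I}_\nu^{\mathtt{k}}(x)$ outright, suppressing the factor $2\sqrt{\mathtt{k}}\,\Gamma_{\mathtt{k}}(\nu+\mathtt{k})/\left(\sqrt{\pi}\,\Gamma_{\mathtt{k}}\left(\nu+\tfrac{\mathtt{k}}{2}\right)\right)$ coming from \eqref{normalized-k} and \eqref{int-14}; these $\nu$-dependent prefactors do not cancel across \eqref{eqn:chebyshev-1}, so the constant-free form of \eqref{chv-1} is not what the computation yields --- your instinct that the $\Gamma_{\mathtt{k}}$-bookkeeping must be done carefully is right, but the honest outcome is an extra constant, not the clean statement. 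Finally, on $(-3\mathtt{k}/4,-\mathtt{k}/2]$ the integrals genuinely diverge (at $\nu=-\mathtt{k}/2$ the exponent of $f$ is $-1$), exactly the convergence failure you identify; the paper's side conditions $\nu\ge-\mathtt{k}$, $\nu\ge-2\mathtt{k}$, $\nu\ge-3\mathtt{k}/4$ are series-domain thresholds, not integrability ones. Your proposed repairs --- restricting $x$, a coefficientwise argument via the even-function form of Lemma \ref{lemma:1}, or continuation in $\nu$ --- are the right kind of fix; the paper offers none.
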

\begin{proof}
Define the functions $p$, $f$ and $g$ on $[0, 1]$ as
\[ q(t)= \cos\left(\tfrac{x t}{\sqrt{\mathtt{k}}}\right), \quad  f(t)=(1-t^2)^{\tfrac{v}{\mathtt{k}}-\tfrac{1}{2}}, \quad   g(t)=(1-t^2)^{\tfrac{v}{\mathtt{k}}+\tfrac{1}{2}}.\]
Then
\begin{align*}
\int_0^1 q(t) dt&= \int_0^1  \cos\left(\tfrac{x t}{\sqrt{\mathtt{k}}}\right) dt= \frac{\sqrt{\mathtt{k}}}{x} \sin\left(\tfrac{x }{\sqrt{\mathtt{k}}}\right);\\
\int_0^1 q(t) f(t)dt&= \int_0^1  \cos\left(\tfrac{x t}{\sqrt{\mathtt{k}}}\right)(1-t^2)^{\tfrac{v}{\mathtt{k}}-\tfrac{1}{2}} dt=
\mathcal{I}_{\nu}^{\mathtt{k}}(x) ,\quad  \text{if}\quad  \nu \geq - \mathtt{k}; \\
\int_0^1 q(t) g(t)dt&= \int_0^1  \cos\left(\tfrac{x t}{\sqrt{\mathtt{k}}}\right)(1-t^2)^{\tfrac{v}{\mathtt{k}}+\tfrac{1}{2}} dt=
\mathcal{I}_{\nu+\mathtt{k}}^{\mathtt{k}}(x) ,\quad  \text{if} \quad  \nu \geq - 2\mathtt{k}; \\
\int_0^1 q(t) f(t) g(t)dt&= \int_0^1  \cos\left(\tfrac{x t}{\sqrt{\mathtt{k}}}\right)(1-t^2)^{\tfrac{2 v}{\mathtt{k}}} dt=
\mathcal{I}_{2 \nu+\tfrac{\mathtt{k}}{2}}^{\mathtt{k}}(x) ,\quad  \text{if} \quad  \nu \geq - \tfrac{3\mathtt{k}}{4}; \\
\end{align*}
Since the functions $f$ and $g$ both are decreasing for $\nu \geq \mathtt{k}/2$ and both are  increasing for  $\nu \in (-3\mathtt{k}/4,- \mathtt{k}/2]$, the inequality \eqref{eqn:chebyshev-1} yields \eqref{chv-1}. On the other hand if $\nu \in (-\mathtt{k}/2, \mathtt{k}/2)$, the function $f$ is increasing but $g$ is decreasing, and hence the inequality in \eqref{chv-1} is reversed.

Similarly, the inequality in \eqref{chv-2} can be derived by using \eqref{eqn:chebyshev-1} by choosing
\[ q(t)= \cosh\left(\tfrac{x t}{\sqrt{\mathtt{k}}}\right), \quad  f(t)=(1-t^2)^{\tfrac{v}{\mathtt{k}}-\tfrac{1}{2}}, \quad   g(t)=(1-t^2)^{\tfrac{v}{\mathtt{k}}+\tfrac{1}{2}}.\]

\end{proof}

\end{document}